\newcommand{\NN}{{\mathbb{N}}}
\newcommand{\A}{{\mathcal{A}}}
\newcommand{\V}{\mathcal{V}}
\newcommand{\FF}{\mathfrak{F}}
\newcommand{\FQ}{{\mathfrak{F}_{_Q}}}
\newcommand{\T}{{\mathcal{T}}}
\newcommand{\kk}{{\mathbf k}}
\newcommand{\XX}{{\mathbf X}}
\def\kall{{(\!(\mathbf k)\!)}}
\newcommand{\norm}[1]{\left|\hspace*{2.5pt} \!\!\left|
#1\right|\hspace*{2.5pt} \!\!\right|}
\numberwithin{equation}{section}
\numberwithin{figure}{section}
\newcolumntype{C}{>{$}c<{$}}
\newcolumntype{L}{>{$}l<{$}}
\newcolumntype{R}{>{$}r<{$}}
\newtheorem{theorem}{Theorem}%[section]
\newtheorem{lemma}{Lemma}%[section]
\newtheorem{proposition}{Proposition}%[section]
\theoremstyle{remark}
\newtheorem{remark}{Remark}[section] % \renewcommand{\theremark}{}
\begin{document}

\title[Geometry behind a recurrent relation]
{On the geometry behind a recurrent relation}
\author{Cristian Cobeli and Alexandru Zaharescu}

%%%%%%%%%%%%%%%%%%%%%%%
\address{
\indent CC \textit{and} AZ: 
\emph{Simion Stoilow},
 Institute of Mathematics of the Romanian Academy,  \newline
\indent
P.O. Box 1-764, 
Bucharest, 70700, Romania.}
\email{cristian.cobeli@imar.ro}

%%%%%%%%%%%%%%%%%%%%%%%
\address{
\indent AZ:
Department of Mathematics, 
University of Illinois at Urbana - Champaign,  \newline
\indent 1409 West Green Street, %\\
Urbana, IL 61801, USA.
}
\email{zaharesc@illinois.edu}

% \setcounter{page}{0}% to be added afterwards
% \coordinates{xx}{2011}{1}{00-00}% to be added afterwards
% \date{date}% to be added afterwards % submission date 
% \undate{date}% to be added afterwards % revised date 
\subjclass[2010]{Primary 11B57, %Farey Sequences 11B57 Farey sequences; the sequences 1k , 2k , · ·
Secondary 11B37}% Recurrences {For applications to special functions, see 33–XX}
% AMS Subject Classification (2010)
\keywords{\em Farey sequences, tessellations, geometry of recurrent relations.}
%\corauthor{Cristian Cobeli, cristian.cobeli@imar.ro}
\begin{abstract}
We consider a certain linear recursive relation with integer parameters and
study some of its algebraic and geometric properties, with the purpose
of estimating the number of chains of valences in the Farey series.
\end{abstract}

\maketitle
%\pagestyle{myheadings}
%\markboth{Cristian Cobeli and Alexandru Zaharescu}{Geometry behind a recurrent relation}

%%%%%%%%%%%%%%%%%%%%%%%%%%%%%%%%%%%%%
\section{Introduction}\label{sec:Introduction}

In the present paper we study some of the significant properties and consequences of a recurrent
construction involving a sequence of polynomials that appears naturally in different contexts where
the
algebra and geometry are linked with the arithmetical features of integers. As in \cite{CZ2006},
let $p_{-1}(\cdot)=0$, $p_0(\cdot)=1$, and then recursively, for any
integer $r\ge 1$ and variables $X_1,X_2,\dots X_r$, let
    \begin{equation}\label{eqR}
	p_r(X_1,\dots,X_r)=X_rp_{r-1}(X_1,\dots,X_{r-1})
			-p_{r-2}(X_1,\dots,X_{r-2}).\tag{R}
    \end{equation}
We write $\XX=(X_1,\dots,X_r)$ when the \emph{order} (or
\emph{length}) $r$ is understood from the context and $p_r(\XX)$
instead of $p_r(X_1,\dots,X_r)$. 
A few polynomials of small orders satisfying \eqref{eqR} are:
   \begin{align*}
     p_1(\XX)=&X_1;\qquad \quad
     p_2(\XX)=X_1X_2-1;\qquad \quad
     p_3(\XX)=X_1X_2X_3-X_1-X_3;\\
     p_4(\XX)=&X_1X_2X_3X_4-X_1X_2-X_1X_4-X_3X_4+1;\\
     p_5(\XX)=&X_1X_2X_3X_4X_5-X_1X_2X_3-X_1X_2X_5
                  -X_1X_4X_5-X_3X_4X_5+X_1+X_3+X_5\\
     p_6(\XX)=&X_1X_2X_3X_4X_5X_6 - X_1X_2X_3X_6-X_1X_4X_5X_6 \\
&\phantom{X_1X_2X_3X_4X_5X_6}- X_1X_2X_5X_6
	   -X_1X_2X_5X_6-X_1X_2X_3X_4-X_3X_4X_5X_6\\
&+X_1X_2+X_5X_6+X_1X_4+X_3X_6+X_1X_6+X_3X_4-1\,.
   \end{align*}  
Relation \eqref{eqR} has many nice properties.  
For example,
it produces the symmetry 
\begin{equation}\label{eqSimmetry}
   p_r(X_1,\dots,X_r)=p_r(X_r,\dots,X_1)\,.
\end{equation}
Notice also  the alternation in the signs of  the monomials of $p_r(\XX)$ for values of $r$ of the
same parity.
The polynomials $p_r(\XX)$ will be used with suitable values
$X_j=k_j$.
We call an $r$-tuple $\kk=(k_{1},\dots,k_{r})$ of positive integers \textit{admissible} 
if there exists an integer $Q\ge 1$ and integers $1\le
q_0,q_1,\dots,q_{r+1}\le Q$ with the following properties:
     \begin{subequations}
       \begin{eqnarray}
         \gcd(q_j,q_{j+1})=1,\quad \text{ for $0\le j\le r$;}
         \label{defa}\\
         q_j+q_{j+1}>Q, \quad \text{ for $0\le j\le r$;}
         \label{defb}\\
         k_jq_j=q_{j-1}+q_{j+1},\quad \text{ for $1\le j\le r$.}
         \label{defc}
     \end{eqnarray}
   \end{subequations}
We call the components of $\kk$ \emph{valences} and say that they
are generated by the \emph{denominators} $q_0, q_1,\dots,q_{r+1}$.
An $r$-tuple of consecutive valences will also be called a
\emph{chain of valences}.

% %%%%%%%%%%%%%%%%%%%
% % %\newpage
\begin{center}
\begin{pspicture}(-2,-0.5)(10,2.5)%
%\showgrid
%\rput(1,1){\rnode{A}{\psframebox{CAT}}}
$
\psset{nodesep=2pt}
%\rput(-2,2){\rnode{mininfsus}{\cdots\cdots}}
%\rput(-2,0){\rnode{mininfjos}{\cdots\cdots}}
%\rput(-1,2){\rnode{nimic}{\cdots\cdots}}
\rput(-1,0){\rnode{ALEF}{q_0}}
\rput(0,0){\rnode{A}{q_1}}
\rput(1,0){\rnode{B}{q_2}}
\rput(2,0){\rnode{C}{q_3}}
\rput(3,0){\rnode{D}{\ }}%q_4
%\cnodeput(4,0){E}{\ \ }
\rput(4,0){\Rnode{E}{\ }}%q_5
\rput(5,0){\rnode{E1}{\cdots\cdots\cdots\cdots\cdots\cdots\cdots}}
\rput(0,2){\rnode{a}{k_{1}}}
\rput(1,2){\rnode{b}{k_{2}}}
\rput(2,2){\rnode{c}{k_{3}}}
\rput(3,2){\rnode{d}{\ }}%k_4
%\cnodeput(4,2){e}{\Rnode{\ \ }}
\rput(4,2){\rnode{e}{\ }}
\rput(5,2){\rnode{e1}{\cdots\cdots\cdots\cdots\cdots\cdots\cdots}}
\rput(7,2){\rnode{s}{\ }}%k_{r-2}
\rput(8,2){\rnode{t}{k_{r-1}}}
\rput(7,0){\rnode{T}{\ }}%q_{r-2}
\rput(9,2){\rnode{y}{k_{r}}}
\rput(8,0){\rnode{X}{q_{r-1}}}
\rput(9,0){\rnode{Y}{q_{r}}}
\rput(10,0){\rnode{Z}{q_{r+1}}}
\ncarc{<->}{ALEF}{a}
\ncline{<->}{a}{A}
\ncarc{<->}{a}{B}
\ncarc{<->}{A}{b}
\ncline{<->}{b}{B}
\ncarc{<->}{b}{C}
\ncarc{<->}{B}{c}
\ncline{<->}{c}{C}
\ncarc{<->}{c}{D}
\ncarc{<->}{C}{d}
%\ncline{<->}{d}{D}
%\ncarc{<->}{d}{E}
%
\ncarc{<->}{T}{t}
\ncline{<->}{t}{X}
\ncarc{<->}{t}{Y}
\ncarc{<->}{s}{X}
\ncarc{<->}{T}{t}
\ncline{<->}{t}{X}
\ncarc{<->}{t}{Y}
\ncarc{<->}{X}{y}
\ncline{<->}{y}{Y}
\ncarc{<->}{y}{Z}
\rput(-0.5,-1){\rnode{M}{}}
%\rput(-0.5,0){\ovalnode{I}{\ \ \LARGE \quad\qquad}}
%\nccircle{-}{M}{1}
%\nccurve{M}{M}
%\nccurve[angleM=5]{M}{a}
\psellipse[linestyle=dotted](-0.5,0)(1,0.6)
$
\end{pspicture}
\newline
{\small {Figure 1.} A chain of valences and their generators.}
\end{center}

%%%%%%%%%%%%%%%%%%%%%%%%%%%%
Notice that it would be enough to only require in \eqref{defa} that two neighbor
denominators are relatively prime, since by \eqref{defc} the same
property radiates recursively to all the other pairs of neighbor denominators.

We remark that by relations \eqref{defa}-\eqref{defc} it follows that any admissible $r$-tuple
$\kk$ can be extended  to an admissible  sequence $\kall$ that is infinite on both ends. (We call
an infinite sequence admissible if all its $r$-subchains of consecutive valences are admissible.)
Notice that the extension is not unique.
There is a close  connection between the sequence of polynomials defined by relation \eqref{eqR}
and Farey sequences.
For more details the reader is referred to \cite[Section 6]{CZ2006}.

A few experiments reveal a peculiar property of $\kall$. One may find
 in $\kall$ components indefinitely large, but in any neighborhood
of such a component all the others are comparatively small. And the
larger a valence is, the larger is its neighborhood with only small
components. Here are a few examples of admissible chains $\kk$, that shed some light on this
phenomenon:
%\begin{eqnarray*}
\begin{equation*}
  \begin{split}
%  &[13,1,2,2,3,1,5,1,3,1,8];\\
  &[11,1,2,2,3,1,5,1,3,2,2,1,12];\\
  &[10,1,2,3,1,5,1,4,1,3,2,2,1,15];\\
  &[16,1,2,2,2,3,1,5,1,3,1,7,1,3,1,5,1,3,2,2,2,1,16];\\
  &[1,6,1,3,1,5,1,4,1,3,2,2,2,2,2,2,1,28,1,2,2,2,2,2,3].
  \end{split}
\end{equation*}
% \end{eqnarray*}
 
For any chain of valences $\kk$, we define the norm of $\kk$, 
to be its largest component.
We denote the norm of $\kk$ by  $\norm{\kk}$.
Let $\A_r$ be the set of admissible chains of valences of
\mbox{length $r$}.  Our aim is to
estimate the size of $\A_r$. The main result below unveils the following peculiar fact:
for each
positive integer $r$,
 the number of admissible chains of length $r$ and norm at most $x$ grows almost linearly as a
function of $x$.

%%%%%%%%%%%%%%%%%%%%%%%%%%%%%%%%
\begin{theorem}\label{TheoremAr}
For any integer $r\ge 1$, we have
    \begin{equation}\label{eqFormula}
 %     \sum_{\substack{(k_1,\dots,k_r)\in\A_r\\ 1\le k_1,\dots,k_r\le K}}1 
      \sum_{\substack{\kk\in\A_r\\ \norm{\kk}\le x}}1 
      = rx+O_r(1).
    \end{equation}
\end{theorem}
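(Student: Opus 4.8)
The plan is to turn admissibility into a scale-invariant (projective) condition on the valences and then to exploit the \emph{isolation phenomenon} visible in the examples: a large valence forces a completely rigid neighbourhood. Fixing $q_0,q_1$, relation \eqref{defc} determines the whole string through the linear recurrence $q_{j+1}=k_jq_j-q_{j-1}$, so each $q_j$ is an integer linear form in $(q_0,q_1)$ whose coefficients are the continuant polynomials produced by \eqref{eqR}. Condition \eqref{defa} holds for every index as soon as $\gcd(q_0,q_1)=1$, while the existence of an admissible modulus $Q$ amounts, after \eqref{defb} and the bounds $1\le q_j\le Q$, to the single requirement $\max_{j}q_j<\min_{i}(q_i+q_{i+1})$ together with $q_j\ge1$ for all $j$. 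All of these inequalities are homogeneous in $(q_0,q_1)$, so they carve out an open cone $R_{\kk}\subset\RR_{>0}^{2}$; since a nonempty open cone in the first quadrant contains a primitive integer vector, I would record the clean equivalence that $\kk$ is admissible if and only if $R_{\kk}\ne\emptyset$, i.e.\ if and only if the system of inequalities in the $k_j$ is consistent.

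The core of the argument is a structure lemma describing chains of large norm. Suppose $k_m=\norm{\kk}=n$. From $k_mq_m=q_{m-1}+q_{m+1}\le2Q$ one gets the scale-free bound $q_m/Q\le2/n$, and then \eqref{defb} pins both neighbours into the short window $q_{m\pm1}\in(Q-q_m,Q]$. Substituting this into $k_{m\pm1}=(q_{m\pm2}+q_m)/q_{m\pm1}$ forces $k_{m\pm1}=1$ once $n>6$, hence $q_{m\pm2}=q_{m\pm1}-q_m$; feeding the estimate back in, an induction shows that $q_{m-1-i}$ and $q_{m+1+i}$ each drop by exactly $q_m$ at every step while the matching valences all equal $2$. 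Thus there is a threshold $n_0(r)$ so that for $n>n_0(r)$ the chain is forced to be $[2,\dots,2,1,n,1,2,\dots,2]$, depending only on the peak position $m$ (and truncated when $m$ is near an end). A separate short computation excludes a second large valence: two small denominators cannot both obey \eqref{defb}, and the linear decay by $q_m$ on either side of a peak cannot be reconciled with a second peak inside a window of bounded length, so for $n$ large the maximum is attained at a unique $m\in\{1,\dots,r\}$.

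For the matching lower bound I would check that each of these $r$ candidate patterns is genuinely admissible for all large $n$ by exhibiting an explicit witness: take $q_m=1$ (or $q_m=2$ to correct the parity of $n$), $q_{m\pm1}=Q$, and let the forced values $q_{m-1-i}=Q-iq_m$ run out to the ends of the window; one verifies directly that they stay positive and satisfy $\max_jq_j<\min_i(q_i+q_{i+1})$, so a valid $Q$ exists. Consequently, for every $n>n_0(r)$ there are exactly $r$ admissible chains of length $r$ and norm $n$, one for each peak position. Since for any $n$ there are only finitely many chains of norm $n$ (all entries are at most $n$), summing over $n$ gives
\[
\sum_{\substack{\kk\in\A_r\\ \norm{\kk}\le x}}1
=\sum_{n\le n_0(r)}\#\{\kk\in\A_r:\norm{\kk}=n\}
+\sum_{n_0(r)<n\le x}r
=rx+O_r(1),
\]
which is \eqref{eqFormula}.

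The main obstacle is the uniformity of the structure lemma: I must secure the forcing $k_{m\pm1}=1$, $k_{m\pm j}=2$ and the exclusion of a second peak with a threshold $n_0(r)$ that does not depend on the unknown modulus $Q$. The decisive point is that every estimate is governed by the scale-free ratio $q_m/Q\le2/n$, so all bounds are automatically uniform in $Q$; what remains is the bookkeeping of the at most $r$ inductive steps before the window terminates, together with the trivial treatment of the finitely many small $n$, both of which only feed into the $O_r(1)$ term.
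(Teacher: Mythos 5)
Your proposal is correct, and its overall architecture coincides with the paper's: both arguments hinge on the scale invariance of admissibility in the germ $(q_0,q_1)$ (your open cone $R_{\kk}\subset\RR_{>0}^2$ is precisely the tile $\T_r[\kk]$ of the Farey triangle, rescaled by $Q$), both establish that a large valence forces the rigid neighbourhood $\dots,2,2,1,n,1,2,2,\dots$ with a unique peak, and both conclude that each large $n$ is attained by exactly $r$ chains, one per peak position. Where you genuinely diverge is in the execution of the forcing step. The paper proves ``at most one very large component'' by showing that the angular region $\V(k_1,\dots,k_r)$ lies entirely below the quadrangle $\T_1[k_1]$ when $k_1,k_r$ are both large, which requires comparing slopes and intercepts of the bounding lines via the continuant polynomials $p_r$ and the symmetry \eqref{eqSimmetry}; it then obtains the rigid pattern and its admissibility from the explicit vertex formulas of Proposition~\ref{Lemma107}. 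You instead chase the denominators directly: the single scale-free inequality $q_m/Q\le 2/\norm{\kk}$ pins $q_{m\pm1}$ into $(Q-q_m,Q]$ and propagates outward to force $k_{m\pm1}=1$ and $k_{m\pm j}=2$, with uniformity in $Q$ built in. Your route is more elementary and self-contained (no polynomial identities needed, and the exclusion of a second peak falls out of the same induction), at the cost of some parity bookkeeping in the witness construction; the paper's route is heavier but delivers the explicit tile geometry --- the vertex formulas and the two attractors $(1,0)$, $(1,1)$ --- as a by-product of independent interest, and connects the count to the tessellation framework used throughout the Farey literature it builds on.
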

%%%%%%%%%%%%%%%%%%%%%%%%%%%%%%

\begin{remark}\rm
We found that for $n$ positive integer and sufficiently large the difference
\begin{equation*}
   \delta_r(n):=\#\big\{
	\kk\in\A_r\colon\ \norm{\kk} \le n
	   \big\}
	-rn
 \end{equation*}
 becomes constant.
We denote this constant, which depends only on $r$, by $C(r)$. 
The first twenty five values of $C(r)$ are:
$C(1)=0;$
$C(2)=3;$
$C(3)=15;$
$C(4)=41;$
$C(5)=84;$
$C(6)=153;$
$C(7)=247;$
$C(8)=367;$
$C(9)=523;$
$C(10)=721;$
$C(11)=961;$
$C(12)=1251;$
$C(13)=1588;$
$C(14)=1983;$
$C(15)=2437;$
$C(16)=2963;$
$C(17)=3548;$
$C(18)=4219;$
$C(19)=4954;$
$C(20)=5761.$
\end{remark}

\noindent
\textbf{Open problem.} 
We leave open the question of whether there exists a closed formula for $C(r)$ for all $r$, or at
least for $r$ large enough.

%%%%%%%%%%%%%%%%%%%%%%%%%%%%%%%%%%%%%%%%%%%%%%%%%%%%%%%%%%%%%%%%%%%%%%
\section{The Farey Sequence}
\label{sec:Farey}
About two
hundred years ago Haros and Farey observed (see also~\cite{BA1995}, \cite{CZ2003} and the references
therein)  that by arranging
the subunitary fractions with denominators at most a given $Q\ge 1$ in
ascending order, the finite sequence obtained has remarkable
properties. Thus, if $a'/q'<a''/q''$ are consecutive fractions then
$a''q'-a'q''=1$ and $q'+q''>Q$.
Given $Q\ge 1$, let 
\begin{equation*}
\FQ:=\left\{\frac aq\in [0,1]\colon\ \gcd(a,q)=1,\ q\le Q\right\}.
\end{equation*}
Arranged in ascending order, this is the Farey sequence of order $Q$. 
For example the sequence of Farey fractions of order $8$ is
\begin{equation*}
  \FF_{_8}:=\left\{\frac 01, \frac 18, \frac 17,\frac 16, \frac 15, \frac 14, 
  \frac 27, \frac 13, \frac 38,
  \frac 25, \frac 37,\frac 12,\frac 47, \frac 35, \frac 58, \frac 23, \frac 57,
  \frac 34,\frac 45,\frac 56, \frac 67, \frac 78, \frac 11\right\}.
\end{equation*}
Relations \eqref{defa}-\eqref{defc} are crystallized from some basic features
of a Farey sequence. If
$a'/q',a''/q'',a'''/q'''$ are consecutive Farey fractions, then
$(a'+a''')/a''=(q'+q''')/q''\in\NN^*$, any neighbor denominators
are relatively prime, and their sum is greater than the order.
Thus, if $a_0/q_0,a_1/q_1,\dots,a_{r+1}/q_{r+1}$ are consecutive fractions in $\FQ$ and
$\kk=(k_1,\dots,k_r)$ is the chain
of the associated valences to the fractions, in \cite{CZ2006}
it is shown that
  \begin{equation*}
    \frac{a_{r+1}}{q_{r+1}}-\frac{a_0}{q_0}=\frac{p_r(\kk)}{q_0q_{r+1}}\,.
  \end{equation*}
In recent years the authors of
\cite{BCZ2000}, \cite{BCZ2001},
\cite{ABCZ2001},
\cite{BGZ2002},
  \cite{CFZ2003}, \cite{BCZ2003}, \cite{CIZ2003},
\cite{Hay2003},
\cite{Hay2004},
\cite{BZ2005},
\cite{BZ2006},
%\cite{CZ2005},  
\cite{CZ2006}, 
\cite{Boc2008},
\cite{CVZ2010a},  
\cite{CVZ2010b}, 
%\cite{Hay2010},
\cite{BH2011},
\cite{CVZ2012},  
\cite{AC2013}
investigated various questions on the distribution of Farey fractions, the tessellations
and their polygonal  tiles.

%%%%%%%%%%%%%%%%%%%%%%%%%%%%%%%%%%%%%%%%%%%%%%%%%%%%%%%%%%%%%%%%%%%%%%
\section{Germs, Tiles and  Tessellations}
\label{sec:Germs}

Any chain of valences
has many corresponding chains of denominators (actually infinitely
many as $Q\to\infty$), but conversely, exactly one chain of valences
corresponds to a given chain of admissible denominators.

In the following, if $(k_1,\dots,k_r)$ is a chain of valences with
denominators $(q_0,q_1,\dots,q_r)$, we shall call the pair $(q_0,q_1)$
a pair of \emph{integer germs} of $\kk$.

For a given $\kk\in\A_r$ and $Q$ sufficiently large, let
$\T^Q[\kk]=\T_r^Q[\kk]$ be the set of integer germs of $\kk$. Since
this set depends on $Q$ and we are interested in all admissible chains,
independent of the size of their germs, it is natural to let $Q$
approach infinity and move the problem to a bounded frame. 
Starting with two variables $x,y$, we put $x_{-1}=x$, $x_0=y$
and then define
    \begin{equation}\label{eqRR}
	x_j=x_j(k_1,\dots,k_j;x,y):=k_jx_{j-1}-x_{j-2},\quad
        \text{for $j\ge 1$,}\tag{RR}
    \end{equation}
where $k_j=\Big[ \tfrac{1+x_{j-2}}{x_{j-1}} \Big]$.
The connection with relation \eqref{eqR} is given in the next lemma.
(Observe also that when $x=0$ and $y=1$ \eqref{eqRR} produces the same
sequence as \eqref{eqR}.)
%%%%%%%%%%%%% LEMMA 3 %%%%%%%%
	\begin{lemma}\label{Lemma3}
	We have:
    \begin{equation}\label{eqL3}
	x_j=p_j(k_1,\dots,k_j)y-p_{j-1}(k_2,\dots,k_j)x,
		\qquad \text{\rm{for}  $j\ge 1$}\,.	
    \end{equation}
	\end{lemma}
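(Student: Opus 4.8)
The plan is to prove \eqref{eqL3} by induction on $j$, tracking the coefficients of $y$ and of $x$ separately. First I would check the base cases directly from \eqref{eqRR}. For $j=1$ we have $x_1=k_1x_0-x_{-1}=k_1y-x$, which matches $p_1(k_1)y-p_0\,x=k_1y-x$ since $p_1(X_1)=X_1$ and $p_0(\cdot)=1$; it is also convenient to record the degenerate value $j=0$, where $x_0=y=p_0\,y-p_{-1}\,x$ (the second polynomial having an empty argument list and $p_{-1}(\cdot)=0$), so that the two-term recurrence \eqref{eqRR} is fed by two verified anchors.

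For the inductive step, assuming \eqref{eqL3} holds for the indices $j-1$ and $j-2$, I would substitute both expressions into $x_j=k_jx_{j-1}-x_{j-2}$ and split the outcome into a coefficient of $y$ and a coefficient of $x$. The coefficient of $y$ is
\begin{equation*}
k_j\,p_{j-1}(k_1,\dots,k_{j-1})-p_{j-2}(k_1,\dots,k_{j-2}),
\end{equation*}
which is exactly $p_j(k_1,\dots,k_j)$ by one application of \eqref{eqR}, namely appending the new valence $k_j$ on the right of the tuple. The coefficient of $x$ is
\begin{equation*}
-\bigl(k_j\,p_{j-2}(k_2,\dots,k_{j-1})-p_{j-3}(k_2,\dots,k_{j-2})\bigr),
\end{equation*}
so the whole claim reduces to identifying the parenthesized expression with $p_{j-1}(k_2,\dots,k_j)$.

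The step that needs the most care is this second identity, because it involves the \emph{shifted} tuple $(k_2,\dots,k_j)$ rather than the original $(k_1,\dots,k_j)$. The key observation is that \eqref{eqR} is a rule extending a tuple by appending a variable on the right, and that as $j$ increases by one the argument of the $x$-coefficient polynomial also grows by appending precisely $k_j$ on the right, passing from $(k_2,\dots,k_{j-1})$ to $(k_2,\dots,k_j)$. Applying \eqref{eqR} to the length-$(j-1)$ tuple $(k_2,\dots,k_j)$, with newest variable $k_j$, yields
\begin{equation*}
p_{j-1}(k_2,\dots,k_j)=k_j\,p_{j-2}(k_2,\dots,k_{j-1})-p_{j-3}(k_2,\dots,k_{j-2}),
\end{equation*}
which is exactly what is required.

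Thus both coefficients are governed by the same recurrence \eqref{eqR}, applied to two tuples that differ only by a shift of the index window, and the induction closes. There is no deep obstacle here; the only delicate point is the index bookkeeping of the shifted arguments. A quick sanity check at $j=2$, where $x_2=(k_1k_2-1)y-k_2x=p_2(k_1,k_2)y-p_1(k_2)x$, confirms that the shift is handled correctly.
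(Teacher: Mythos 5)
Your proof is correct and follows essentially the same route as the paper: a two-step induction on $j$ in which both the $y$-coefficient and the $x$-coefficient are closed up by one application of the recurrence \eqref{eqR} (the latter applied to the shifted tuple $(k_2,\dots,k_j)$), combined with \eqref{eqRR}. Your explicit verification of the two base anchors and of the shifted-tuple identity is, if anything, slightly more careful than the paper's own write-up.
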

	\begin{proof}
The proof is by induction. For $j=1$, relation
\eqref{eqL3} coincides with \eqref{eqRR}. For $j\ge 2$, using
\eqref{eqR} and the induction hypothesis, we have:
    \begin{equation*}
	\begin{split}
	& p_{j+1}(k_1,\dots,k_{j+1})y-p_{j}(k_2,\dots,k_{j+1})x\\
	=&\big(k_{j+1}p_{j}(k_1,\dots,k_{j})-p_{j-1}(k_1,\dots,k_{j-1})\big)y-	
	  \big(k_{j+1}p_{j-1}(k_2,\dots,k_{j})-p_{j-2}(k_2,\dots,k_{j-1})\big)x\\
	=&k_{j+1}(k_jx_{j-1}-x_{j-2})-(k_{j-1}x_{j-2}-x_{j-3})\\
	=&k_{j+1}x_j-x_{j-1}=x_j\,.
	\end{split}
    \end{equation*}
This completes the proof of the lemma.
	\end{proof}
%%%%%%%%%% END LEMMA 3 %%%%%%%%%%%%%%%%
The next result expresses the integers defined by \eqref{eqRR}
in the language of inequalities.

%%%%%%%%%%%%% LEMMA 4 %%%%%%%%
\begin{lemma}\label{Lemma4}
For any $j\ge 1$, the equality 
$ k_j=\Big[\tfrac{1+x_{j-2}}{x_{j-1}} \Big]$ is equivalent to
    \begin{equation}\label{eqL4}
	\frac{p_{j-1}(k_2,\dots,k_j+1)x+1}{p_j(k_1,\dots,k_j)}< y\le
	\frac{p_{j-1}(k_2,\dots,k_j  )x+1}{p_j(k_1,\dots,k_j)}\,.
    \end{equation}
	\end{lemma}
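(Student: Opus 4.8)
The plan is to strip the floor away into a pair of linear inequalities and then transport each one through Lemma~\ref{Lemma3}. By the very definition of the integer part, the equality $k_j=\big[\tfrac{1+x_{j-2}}{x_{j-1}}\big]$ holds if and only if
\begin{equation*}
k_j\le \frac{1+x_{j-2}}{x_{j-1}}<k_j+1 .
\end{equation*}
Since $x_{j-1}>0$ throughout the construction, one may clear the denominator without reversing the inequalities, getting $k_jx_{j-1}\le 1+x_{j-2}$ together with $1+x_{j-2}<(k_j+1)x_{j-1}$. Substituting the recurrence $x_j=k_jx_{j-1}-x_{j-2}$ from \eqref{eqRR} collapses these to the clean pair
\begin{equation*}
x_j\le 1 \qquad\text{and}\qquad x_{j-1}+x_j>1 .
\end{equation*}
Every step here is reversible, so the floor equality is \emph{equivalent} to this pair; it then remains only to rewrite each half in terms of $x$ and $y$.

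For the upper inequality I would insert $x_j=p_j(k_1,\dots,k_j)y-p_{j-1}(k_2,\dots,k_j)x$ from Lemma~\ref{Lemma3} into $x_j\le 1$ and divide by $p_j(k_1,\dots,k_j)>0$; this produces exactly the right-hand bound of \eqref{eqL4}. For the lower inequality the key point is that $x_{j-1}+x_j$ is itself a value of the same type but with $k_j$ advanced to $k_j+1$. Indeed, because \eqref{eqR} makes $p_j$ linear in its last argument with coefficient $p_{j-1}$, one has the continuant identities
\begin{align*}
p_j(k_1,\dots,k_{j-1},k_j+1)&=p_j(k_1,\dots,k_j)+p_{j-1}(k_1,\dots,k_{j-1}),\\
p_{j-1}(k_2,\dots,k_{j-1},k_j+1)&=p_{j-1}(k_2,\dots,k_j)+p_{j-2}(k_2,\dots,k_{j-1}).
\end{align*}
Adding the Lemma~\ref{Lemma3} expressions for $x_{j-1}$ and $x_j$ and applying these identities gives $x_{j-1}+x_j=p_j(k_1,\dots,k_j+1)y-p_{j-1}(k_2,\dots,k_j+1)x$, so that $x_{j-1}+x_j>1$ rearranges into the left-hand bound of \eqref{eqL4}; note that the natural denominator that emerges here is the shifted polynomial $p_j(k_1,\dots,k_j+1)$, which one should reconcile with the printed form.

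The only genuine obstacle is sign control: the whole argument rests on $x_{j-1}>0$ and $p_j(k_1,\dots,k_j)>0$ (and $p_j(k_1,\dots,k_j+1)>0$), since these are exactly the quantities one multiplies and divides by, and a wrong sign would flip an inequality and break the equivalence. I would therefore first record these positivity facts — $x_{j-1}>0$ because the $x_i$ track genuine positive denominators in the bounded frame, and the positivity of the continuants by induction on $j$ from \eqref{eqR} in the admissible regime — and only afterwards run the reversible chain above. Granting positivity, the remainder is the routine bookkeeping of substituting Lemma~\ref{Lemma3} and the two continuant identities.
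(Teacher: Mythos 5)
Your proof is correct and follows essentially the same route as the paper's (which is a two-line remark: translate the floor into $\tfrac{1+x_{j-2}}{x_{j-1}}-1<k_j\le\tfrac{1+x_{j-2}}{x_{j-1}}$ and insert \eqref{eqL3}); your version merely makes explicit the intermediate reformulation $x_j\le 1$, $x_{j-1}+x_j>1$, the continuant identities, and the positivity facts that the paper leaves tacit. On the one point you flag for reconciliation, you are right and the printed statement is not: the left-hand denominator in \eqref{eqL4} should be the shifted polynomial $p_j(k_1,\dots,k_j+1)$, as your derivation produces. One can confirm this at $j=1$, where the printed form would give the vacuous condition $\tfrac{x+1}{k_1}<y\le\tfrac{x+1}{k_1}$ instead of the correct $\tfrac{x+1}{k_1+1}<y\le\tfrac{x+1}{k_1}$, and by the fact that the paper itself later uses $y=(x+1)/(k_1+1)$ as the equation of the bottom edge of $\T_1[k_1]$. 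So treat that as a typo in the lemma's statement rather than a gap in your argument.
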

	\begin{proof}
The lemma follows by translating
$ k_j=\Big[\tfrac{1+x_{j-2}}{x_{j-1}} \Big]$ into
       \begin{equation*}
	\frac{1+x_{j-2}}{x_{j-1}}-1<k_j\le
	\frac{1+x_{j-2}}{x_{j-1}}\,,
       \end{equation*}
and inserting here the information provided by \eqref{eqL3}.
\end{proof}
%%%%%%%%%% END LEMMA 4 %%%%%%%%%%%%%%%%
The \emph{Farey triangle} is defined by 
  \begin{equation*}
    \T =\big\{ (x,y)\colon\ 
    x+y>1, \text{ and } 0<x,y\le1  \big\} \,,
  \end{equation*}
and
  \begin{equation}\label{eqtes}
    \T_r[k_1,\dots,k_r] =\big\{ (x,y)\in \T \colon\ 
    k_j=\Big[ \tfrac{1+x_{j-2}}{x_{j-1}} \Big],\ \text{for}\ 1\le j\le r  \big\} \,.
  \end{equation}
We call $\T_r[\kk]$ the \emph{tile} of $\kk$. 
Similarly we say that any pair $(x,y)\in\T_r[\kk]$ is a \emph{germ} of $\kk$.

Notice that one can write
   \begin{equation}\label{eqalgorit}
     \begin{split}
       \T_j[k_1,\dots,k_j]&=\Big\{(x,y)\in\T_{j-1}[k_1,\dots,k_{j-1}]\colon\ k_j
       =\Big[\frac{x+1}{y}\Big]\Big\},\quad\text{for }j\ge 1,
     \end{split}
   \end{equation}
with $\T_0[\cdot]:=\T$. This shows that any tile of any admissible
chain is a convex polygon.  It is easy to see that
any two tiles are disjoint, and the set of all tiles $\T_r[\kk]$ with
$\kk\in\A_r$ form a partition of $\T$, which we call the
\emph{tessellation} of order $r$. In this language, our main problem is
to estimate the number of tiles in such a tessellation.

The expression \eqref{eqalgorit} gives also an algorithm to find germs
of $\kk$: Calculate $\T_r[\kk]$; if it is empty, then $\kk$ is not
admissible.  Otherwise choose $Q$ sufficiently large and pick a
pair of relatively prime integers $(q_0,q_1)\in Q\T_r[\kk]$. This is
an integer germ of $\kk$ and any other germ is obtainable by this
method.  In conclusion, given a chain of valences $\kk$ and $Q$
sufficiently large, the polygon $Q\T[\kk]$ contains plenty generators
of $\kk$.

%%%%%%%%%%%%%%%%%%%%%%%%%%%%%%%%%%%%%%%%%%%%%%%%%%%%%%%%%%%%%%%%%%%%%%
\section{Small Orders}
\label{sec:SmallOrders}
Here we look at the size of valences for several small orders.
\newline
%%%%%%%%%%%%%%%%%%
{\tt Case $r=1$.}  Any positive integer is a valence, and the exact
shape of the polygons $\T_1[\kk]$ can be calculated easily by the
definition (see the first case of relation~\eqref{eqnewALL}). Thus we have
    \begin{equation}\label{eqFormula1}
	\begin{split}
          \sum_{\substack{\kk\in\A_1\\ 1\le k_1\le K}}1 & = K,
          \quad\text{for }K\ge 1\,.
	\end{split}
    \end{equation}
%%%%%%%%%%%%%%%%%%
{\tt Case $r=2$.}
Let $k$ and $l$ be two consecutive valences and suppose they are
generated by $q_1,q_2,q_3,q_4$, that is,
	\begin{equation}\label{eqL101}
%		\begin{split}
		kq_2=q_1+q_3\,, \qquad
		lq_3=q_2+q_4\,.
%		\end{split}
	\end{equation}

%%%%%%%%%Lema%%%%%%%%%%%%%%%%%%%%
\begin{lemma}\label{Lemma100}
The smallest of any two consecutive neighbor valences cannot be
larger than~$3$.
\end{lemma}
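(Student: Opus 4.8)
The plan is to argue by contradiction, exploiting the fact that a large valence forces its central denominator to be small, which will clash with the neighbor-sum condition \eqref{defb}. Concretely, suppose for contradiction that both $k\ge 4$ and $l\ge 4$, where $k,l$ are generated by $q_1,q_2,q_3,q_4$ as in \eqref{eqL101}, and let $Q\ge 1$ be the order witnessing admissibility, so that $1\le q_j\le Q$ for each $j$.

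First I would feed the range bounds into the two generating relations \eqref{eqL101}. Since $q_1,q_3\le Q$, the first relation gives $k q_2=q_1+q_3\le 2Q$, and with $k\ge 4$ this yields $4q_2\le 2Q$, i.e.\ $q_2\le Q/2$. Symmetrically, from $q_2,q_4\le Q$ the second relation gives $l q_3=q_2+q_4\le 2Q$, and $l\ge 4$ forces $q_3\le Q/2$.

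It then remains to contradict the admissibility requirement. By \eqref{defb}, applied to the consecutive pair $(q_2,q_3)$, we must have $q_2+q_3>Q$; but the two bounds just obtained give $q_2+q_3\le Q/2+Q/2=Q$, a contradiction. Hence it is impossible for both valences to be at least $4$, so the smaller of any two consecutive valences is at most $3$, which is the assertion.

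I do not expect a genuine analytic obstacle here; the whole difficulty is bookkeeping, and the main point to check carefully is that the single inequality $q_2+q_3>Q$ is indeed available among the hypotheses \eqref{defb} for the relevant $Q$, and that $q_1,q_3,q_2,q_4\le Q$ are exactly the range constraints in the definition of admissibility. One feature worth flagging, since it pins down the constant, is that the threshold $4$ is the smallest integer for which this works: it is precisely the passage from $k\ge 4$ to $q_2\le Q/2$ (and likewise for $q_3$) that produces $q_2+q_3\le Q$, so the bound $3$ is sharp and cannot be improved by this argument.
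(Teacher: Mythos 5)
Your proof is correct and uses exactly the same ingredients as the paper's: the two generating relations, the bounds $q_j\le Q$, and the neighbor-sum condition $q_2+q_3>Q$; the paper merely packages them as the single chain $\min(k,l)Q<\min(k,l)(q_2+q_3)\le kq_2+lq_3=(q_1+q_3)+(q_2+q_4)\le 4Q$ rather than as a contradiction from $q_2,q_3\le Q/2$. This is essentially the same argument in contrapositive form, and your closing remark about the method not improving past $3$ is accurate (though, as the paper's Table~1 shows, the true minimum of two neighbor valences never exceeds $2$).
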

\begin{proof}
By \eqref{eqL101} and the fact that the sum of consecutive
denominators of Farey fractions in $\FQ$ is larger than $Q$, it follows that
	\begin{equation*}
	\min(k,l)Q< \min(k,l)(q_2+q_3)\le kq_2+lq_3
        =(q_1+q_3)+(q_2+q_4)\le 4Q\,,
	\end{equation*}
which gives $\min(k,l)\le 3$, as required.
\end{proof}

%%%%%%%%%Lema%%%%%%%%%%%%%%%%%%%%
\begin{lemma}\label{Lemma101}
There are no two neighbor valences both equal to $1$.
\end{lemma}
\begin{proof}
%The proof is by contrary.
Let $q_1,q_2,q_3,q_4$ be consecutive denominators in $\FQ$, for some
$Q$, and assume that \eqref{eqL101} holds with $k=l=1$. Then, 
adding the two relations, we obtain $q_1+q_4=0$, a
contradiction which completes the proof of the lemma.
\end{proof}

%%%%%%%%%Lema%%%%%%%%%%%%%%%%%%%%
\begin{lemma}\label{Lemma102}
Let $(k,l)$ be two neighbor valences in $\kall$. Then, if
one of $k$ or $l$ is $\ge 5$, than the other is equal to $1$.
\end{lemma}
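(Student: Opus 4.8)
The plan is to argue by contradiction, working directly with the four generating denominators rather than passing to the Farey triangle. By the reflection symmetry of the configuration (reading $q_1,q_2,q_3,q_4$ in reverse interchanges the roles of $k$ and $l$, in accordance with \eqref{eqSimmetry} and the symmetry $a/q\mapsto 1-a/q$ of $\FQ$) it suffices to treat the case where the large valence is $k$; so I would assume $k\ge 5$ and $l\ge 2$ and aim for a contradiction. The idea is to reuse the mechanism of the proof of Lemma~\ref{Lemma100}, combining the two generating relations \eqref{eqL101} with the Farey inequalities $q_j\le Q$ and $q_j+q_{j+1}>Q$, but to keep the two hypotheses $k\ge 5$ and $l\ge 2$ \emph{separate} instead of symmetrizing them through $\min(k,l)$, so as to squeeze a sharper estimate on the middle denominator $q_2$.

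Concretely, the computation I expect runs as follows. The first relation $kq_2=q_1+q_3$ with $q_1\le Q$ and $k\ge 5$ gives a lower bound $q_3\ge kq_2-Q\ge 5q_2-Q$, while the second relation $lq_3=q_2+q_4$ with $q_4\le Q$ and $l\ge 2$ gives an upper bound $2q_3\le lq_3\le q_2+Q$, i.e.\ $q_3\le (q_2+Q)/2$. Chaining these two bounds on $q_3$ yields $5q_2-Q\le (q_2+Q)/2$, hence the decisive estimate $q_2\le Q/3$. Substituting this into $q_2+q_3>Q$ forces $q_3>2Q/3$. Finally, applying the second relation once more, $q_4=lq_3-q_2\ge 2q_3-q_2>2\cdot\tfrac{2Q}{3}-\tfrac{Q}{3}=Q$, which contradicts $q_4\le Q$. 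Thus $l\ge 2$ is untenable when $k\ge 5$, and since valences are positive integers we conclude $l=1$.

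I do not anticipate a genuine obstacle: once the asymmetric bookkeeping is adopted, the argument reduces to a short chain of linear inequalities. The one point demanding care is the \emph{order} in which the hypotheses are spent — one uses $k\ge 5$ to bound $q_3$ from below and $l\ge 2$ to bound $q_3$ from above, extracts $q_2\le Q/3$, and only then invokes $l\ge 2$ a second time to push $q_4$ past $Q$. It is reassuring that the same scheme, when symmetrized, recovers Lemma~\ref{Lemma100} ($\min(k,l)\le 3$), and that the tabulated examples, in which every valence $\ge 5$ is flanked by $1$'s, are consistent with the stated conclusion.
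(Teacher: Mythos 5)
Your proof is correct, but it takes a genuinely different route from the paper. The paper argues geometrically in the Farey triangle $\T$: assuming $k\ge 5$, it compares the bottom edge of the quadrilateral $\T_1[k]$ (equation $y=(x+1)/(k+1)$) with the top edge of the strip corresponding to $l\ge 2$ (equation $y=(lx+1)/(kl-1)$), checks their intersections with the line $x=1$ via the inequality $(l+1)/(kl-1)\le 2/(k+1)$, equivalent to $(k-1)(l-1)\ge 4$, and compares slopes to conclude the two regions are disjoint, so $\T_2[k,l]=\emptyset$. You instead work directly with the four denominators, in the same spirit as the paper's own proofs of Lemmas~\ref{Lemma100} and~\ref{Lemma103}: your chain $5q_2-Q\le q_3\le (q_2+Q)/2$ gives $q_2\le Q/3$, then $q_2+q_3>Q$ forces $q_3>2Q/3$, and $q_4=lq_3-q_2>Q$ contradicts $q_4\le Q$; each step checks out, as does the symmetry reduction to the case $k\ge 5$. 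The paper's geometric version has the advantage of fitting into the tile/tessellation machinery that drives the induction in Section~\ref{sec:Proof} (where the same "line under line" argument is repeated for general $r$), and it exhibits the exact tile that becomes empty; your version is more elementary and self-contained, needs no mention of $\T_1[k]$ or the strip, and arguably reads more cleanly since the paper's proof is somewhat terse about which of $d_1,d_2$ is which. Both yield the same threshold, and no gap is present in your argument.
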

\begin{proof}
Since the pairs $(k,l)$ and $(l,k)$ are either both  admissible or
not, we may assume that $k\ge 5$. Let $d_1$ be the bottom edge of the
quadrilateral $\T_1[k]$, and let $d_2$ be the top line of the strip
that should intersect $\T_1[k]$ in order to have a nonempty
$\T_2[k,l]$.

Our aim is to show that, for any $l\ge 2$, in the triangle $\T$, the
line $d_1$ with equation $y=\frac{lx+1}{kl-1}$ is under the line
$d_2$, whose equation is $y=\frac{x+1}{k+1}$. On $x=1$ this is true
since $(l+1)/(kl-1)\le 2/(k+1)$, which follows by our assumption that
$4\le (k-1)(l-1)$.  

The slope of $d_2$ is greater than
that of $d_1$, and this completes the proof of the lemma.
\end{proof}

Inspecting all the pairs $(k_1,k_2)$ in the remaining cases,
%similarly,
 one finds the set of pairs of neighbor valences presented
in Table~\ref{Table0} .
In particular, this gives
    \begin{equation}\label{eqFormula2}
	\begin{split}
	       \sum_{\substack{\kk\in\A_2\\ 1\le k_1,k_2\le K}}1 & 
			= 2K+3, 
				\quad\text{for }K\ge 4\,.\\
	\end{split}
    \end{equation}

%%%%%%%%%%%%%%%%%%
{\tt Case $r=3$.}
The larger the order, the larger the noise, in other words, many
triples of consecutive valences occur. We check first only the end
points of a triple.
\begin{lemma}\label{Lemma103}
Let $(k,l,m)$ be three consecutive valences. Then, $\min(k,m)< 8$.
\end{lemma}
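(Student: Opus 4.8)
The plan is to imitate the purely arithmetic argument of Lemma~\ref{Lemma100}, now applied to the two \emph{outer} valences of the triple. Write $q_0,q_1,q_2,q_3,q_4$ for the generating denominators of $(k,l,m)$, so that \eqref{defc} reads $kq_1=q_0+q_2$, $lq_2=q_1+q_3$ and $mq_3=q_2+q_4$. Since $\min(k,m)\le k$ and $\min(k,m)\le m$, and all $q_j>0$, I would first bound
\[
\min(k,m)\,(q_1+q_3)\le kq_1+mq_3=(q_0+q_2)+(q_2+q_4)=q_0+2q_2+q_4\le 4Q,
\]
where the last step uses $q_j\le Q$. Thus everything reduces to a lower bound for $q_1+q_3$: once I show $q_1+q_3>Q/2$, the display gives $\min(k,m)\cdot Q/2<4Q$, hence $\min(k,m)<8$.

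The main obstacle is exactly this lower bound, and it is where the triple differs from the pair treated in Lemma~\ref{Lemma100}. The denominators $q_1$ and $q_3$ are not neighbors — they are separated by $q_2$ — so the Farey inequality \eqref{defb} does not apply to their sum directly, and one cannot simply invoke $q_1+q_3>Q$. To circumvent this I would split according to the size of the middle denominator $q_2$. If $q_2\ge 3Q/4$, then from $lq_2=q_1+q_3$ with $l\ge 1$ I get $q_1+q_3\ge q_2\ge 3Q/4>Q/2$. If instead $q_2<3Q/4$, I would add the two genuine instances $q_1+q_2>Q$ and $q_2+q_3>Q$ of \eqref{defb} to obtain $q_1+q_3>2Q-2q_2>2Q-\tfrac32 Q=\tfrac12 Q$. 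In either case $q_1+q_3>Q/2$, which closes the estimate.

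Combining the two displays yields $\min(k,m)<8$, as claimed; notably this route uses only \eqref{defb}, \eqref{defc} and $q_j\le Q$, and never appeals to Lemmas~\ref{Lemma101} or~\ref{Lemma102}. I expect the only delicate point to be keeping the inequalities strict across the case boundary, so that the final bound is strict as stated; any threshold in the interval $(2Q/3,\,Q)$ serves, and the sharpest choice $q_2\gtrless 2Q/3$ in fact delivers the stronger estimate $q_1+q_3\ge 2Q/3$, hence $\min(k,m)\le 6$. I would present the clean $3Q/4$ version and record the improved constant as a remark.
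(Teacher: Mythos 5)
Your proof is correct, and it takes a genuinely different route from the paper's. The paper also starts from the identity $kq_2+mq_4=(q_1+q_3)+(q_3+q_5)\le 4Q$ (in its indexing the denominators are $q_1,\dots,q_5$), but then splits into three cases according to the signs of $mq_4-kq_3$ and $kq_2-mq_3$: when either is nonnegative it concludes directly that one outer valence is $<4$, and only in the remaining case does it derive $Q<2q_3$ from the two strict slope comparisons and then conclude $\min(k,m)<8$ via the middle relation. You instead isolate a single master inequality $\min(k,m)(q_1+q_3)\le 4Q$ and supply the missing lower bound $q_1+q_3>Q/2$ by splitting on the size of the middle denominator, using \eqref{defb} on the two genuinely adjacent pairs when $q_2$ is small and the relation $lq_2=q_1+q_3$ with $l\ge 1$ when $q_2$ is large. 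This correctly identifies, and repairs, the exact point where the pair argument of Lemma~\ref{Lemma100} fails to transfer ($q_1$ and $q_3$ are not neighbors, so \eqref{defb} does not apply to their sum), and all your inequalities check out: the final bound is strict because $q_1+q_3>Q/2$ is strict in both of your cases. Your balanced threshold $q_2\gtrless 2Q/3$ indeed gives $q_1+q_3\ge 2Q/3$ and hence the sharper $\min(k,m)\le 6$, which the paper's worst case does not deliver; since the lemma only feeds a finiteness statement into the enumeration of Table~\ref{Table0} (where the largest value of $\min(k,m)$ actually occurring is $4$), either constant suffices, but your argument is more uniform and slightly stronger.
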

%%%%
\begin{proof}
Suppose $k$, $l$ and $m$ are generated by $q_1,q_2,q_3,q_4,q_5$, that is,
	\begin{align}%\label{eqL102}
%		\begin{split}
		kq_2&=q_1+q_3, \label{102a}\\
		lq_3&=q_2+q_4\,,\label{102b}\\
		mq_4&=q_3+q_5\,.\label{102c}
%		\end{split}
	\end{align}

We split the argument in three parts.

{\tt Case 1.} Suppose $mq_4-kq_3\ge 0$. By \eqref{102a} and \eqref{102c} we
obtain $kq_2+mq_4\le 4Q$. Then 
	\begin{equation*}
	kQ<k(q_2+q_3)\le kq_2+kq_3+mq_4-kq_3
           <(q_1+q_3)+(q_3+q_5)\le 4Q\,,
	\end{equation*}
which gives that $k<4$.

{\tt Case 2.} Suppose $kq_2-mq_3\ge 0$. By symmetry, or proceeding
similarly as in {\tt Case 1}, it follows that $l<4$.

{\tt Case 3.} Now assume that $mq_4-kq_3< 0$ and $kq_2-mq_3< 0$.
Then
	\begin{align*}
		mQ&<m(q_3+q_4)<(k+m)q_3,\\
		kQ&<k(q_2+q_3)<(k+m)q_3,
	\end{align*}
which give $Q<2q_3$. Then, by \eqref{102b},
$Q/2<q_3\le lq_3=q_2+q_4$. This implies
	\begin{equation*}
	\min(k,m)Q/2< \min(k,m)(q_2+q_4)\le kq_2+mq_4\le 4Q\,,
	\end{equation*}
that is, $\min(k,m)< 8$, as claimed.
\end{proof}

On combining Lemma~\ref{Lemma103} with \eqref{eqFormula2} and the
analysis for the remaining triples summarized in Table~\ref{Table0},
we obtain \eqref{eqFormula}
%Theorem \ref{TheoremAr}
for $r=3$ with the error term $C(3) = 15$:
    \begin{equation}\label{eqFormula3}
	\begin{split}
	       \sum_{\substack{\kk\in\A_3\\ 1\le k_1,k_2,k_3\le K}}1 & 
			= 3K+15, 
				\quad\text{for }K\ge 4\,.\\
	\end{split}
    \end{equation}

% This proves Theorem \ref{TheoremAr}
% for $r=3$, and establishes the equality $C(3) = 15$.
%, as stated in the Introduction.
% In what follows we
% shall not build along these lines, and instead turn to the
% geometric side of the problem.
%\vspace*{2.6mm}
\smallskip

%%%%%%%%%%%%%%%%%%%%%%%%%%%%%%%%%%%%%%%%%%%%%%%%%%%%%%%%%%%%%%%%%%%%%%
%\scriptsize
\small\scriptsize\footnotesize
\begin{center}
   {\sc Table 1.} Chains of valences. In the second column only one of $\kk$ and its reverse
\reflectbox{$\kk$}
is included.
\end{center}
%\smallskip

\vspace*{-2.6mm}

%%%%%%%%%%%%%%%%%%%%%%%%%%%%%%%%%%%
%% Da distanta dintre liniile duble
\setlength{\doublerulesep}{1pt}
%%%%%%%%%%%%%%%%%%%%%%%%%%%%%%
\setlongtables
%\begin{longtable}{|C|L|}
\begin{longtable}{CL}
%\overset{\curvearrowleft}{\kk}
\hline
r & \mathrm{Chains\  of\  admissible\ valences\ of\ length\ } r \\ \hhline{|--|}
%\hhline{|==|} % pentru linia dubla !!!!!
\endfirsthead
\multicolumn{2}{l}{\small\sl continued from previous page}\\ \hline
\endhead
\hline
\multicolumn{2}{r}{\small\sl continued on next page} \\ %\hline
\endfoot
%\hhline{|=====|}%
\hline
\endlastfoot
%%%%%%%%%%%%%%%%%%%%%%%%%%%%%%%%%%%%%%%%%%%%%%%%%%%%%%%%%%%%%%%%%%
%Ordinul 1
1 & (k) \ \text{for } k\ge 1 \\ \hhline{|--|} %\hhline{|==|} % pentru linia dubla !!!!!
%Ordinul 2
2 & (1,k) \ \text{for } k\ge 2;\ \ (2,2);\; (2,3);\; (2,4) \\ \hhline{|--|}
%Ordinul 3
3 & (1,k,1) \ \text{for } k\ge 3;\ \ 
    (2,1,k) \ \text{for } k\ge 6; \ \ (2,2,2);\; (2,3,2);\; (4,1,4);\ \
    \\\hhline{||}
  &  (1,2,2);\; (1,2,3);\; (1,2,4);\; 
   (1,3,2);\; (1,4,2);\; (2,2,3);\;  \\\hhline{||}
   &(3,1,4);\; (3,1,5);\; (3,1,6);\; (3,1,7);\; (3,1,8);\; 
   (4,1,5)\\ \hhline{|--|}
%Ordinul 4
4 & (1,k,1,2) \ \text{for } k\ge 6;\
    (2,2,1,k) \ \text{for } k\ge 10;\ \ (2,2,2,2); \\\hhline{||}
 &(1, 2, 2, 2);\;(1, 2, 2, 3);\;(1, 2, 3, 1);\;(1, 2, 3, 2);\;
  (1, 2, 4, 1);\;(1, 3, 2, 2);  \\\hhline{||}
 &(1, 3, 1, 5);\;(1, 3, 1, 6);\;(1, 3, 1, 7);\;(1, 3, 1, 8);\; 
  (1, 4, 1, 4);\;(1, 4, 1, 5);\;(1, 5, 1, 4); \\\hhline{||}
 &(1, 4, 1, 3);\;(1, 5, 1, 3);\;(1, 6, 1, 3);\;  
  (1, 7, 1, 3);\;(1, 8, 1, 3);\\\hhline{||}
 &(2, 2, 2, 3);\;(2, 2, 3, 2);\; (2, 3, 1, 4);\;(2, 3, 1, 5);\;(2, 3, 1, 6);
  (2, 4, 1, 3);\;(2, 4, 1, 4);\\\hhline{||}
 &(3, 2, 1, 7);\;(3, 2, 1, 8);\;
  (3, 2, 1, 9);\;(3, 2, 1, 10);\;(3, 2, 1, 11);\;(3, 2, 1, 12); \\\hhline{||}
 &(4, 2, 1, 6);\;(4, 2, 1, 7);\;(4, 2, 1, 8) %\hline
%
%\smallskip
%\caption{
% The admissible chains of valences of length $1,2,3$ and $4$.
%   The second column contains only one of $\kk$ and %$\reflectbox{$\kk$}
%   $\stackrel{\curvearrowleft}{\kk}$.
%}
\label{Table0} 
\end{longtable}

\normalsize
%\vspace*{-8mm}

%%%%%%%%%%%%%%%%%%%%%%%%%%%%%%%%%%%%%%%%%%%%%%%%%%%%%%%%%%%%%%%%%%%%%%
\section{Completion of the Proof of Theorem~\ref{TheoremAr}}\label{sec:Proof}
By induction, we show that at most one component of an admissible $r$-tuple can be
excessively large.
Thus, for a given $\kk=(k_1,\dots,k_r)\in\A_r$ 
%we have to provide a bound for $\min(k_1,k_r)$, 
%while bounds for the other components
we have to show that the minimum of $k_1$ and $k_r$ can not exceed a certain margin, while bounds
for the other components $k_2,\dots,k_{r-1}$ follow by the induction
hypothesis. 
% Then, by Lemma~\ref{Lemma107}, it will follow that if one of the
% components of $\kk$ is large, then $\kk$ is uniquely
% determined. Furthermore, the corresponding polygons $\T_r[\kk]$ are
% explicitly given by~\eqref{eqn107z}--\eqref{eqn107b}.
For this it is helpful to see that $\T_r[\kk]$ lies at the
intersection between $\T_{r-1}[k_1,\dots,k_{r-1}]$ and the angular region
defined by the last condition in the definition of $\T_r[\kk]$:
	\begin{equation*}
		\V(k_1,\dots,k_r):=\Big\{ (x,y)\in \T \colon\ 
              k_r=\Big[ \tfrac{1+x_{r-2}}{x_{r-1}} \Big] \Big\} \,,
	\end{equation*}
in which the $x_j=x_j(k_1,\dots,k_j)$, for $j\ge 1$,  are defined by
\eqref{eqL3}. We claim that if both $k_1$ and $k_r$ were large enough,
then the intersection $\T_{r-1}[k_1,\dots,k_{r-1}]\cap\V(k_1,\dots,k_r)$ is
empty. 
% (This suffices, since $\T_1[k_1[\supset\T_{r-1}[k_1,\dots,k_r]$, for
% any $r\ge 2$.)
This would imply $\kk\not\in\A_r$, contradicting our assumption.

The main point of the proof is to show more than it is required. Namely, we
shall show that even the superset $\T_{1}[k_1]\cap\V(k_1,\dots,k_r)$ is
empty when $k_1,k_r$ both surpass a certain magnitude. We do this by
proving that the angle $\V(k_1,\dots,k_r)$ lies under $\T_1[k_1]$.

From \eqref{eqtes} and \eqref{eqL4} we know that for $k_1\ge 2$, the
line $d'$, the bottom edge of quadrangle  $\T_1[k_1]$ has equation
$y=(x+1)/(k_1+1)$, and $d''$, the top edge of $\V(k_1,\dots,k_r)$ has
equation $y=\frac{p_{r-1}(k_2,\dots,k_r)x+1}{p_r(k_1,\dots,k_r)}$.
The argument has two parts. Firstly we see that in our hypotheses,
the slope of $d''$ is greater than the slope of $d'$ and secondly, we
check the position of the points of intersection of $d'$ and
respectively $d''$ with the vertical line $\{x=1\}$.

Let $m',m''$ be the slopes of $d',d''$ respectively. Then, $m''>m'$ is
equivalent to
	\begin{equation}\label{eqThmp}
		k_1p_{r-1}(k_2,\dots,k_r)>p_r(k_1,\dots,k_r)\,.
	\end{equation}
Here, by \eqref{eqR} and by the symmetry property \eqref{eqSimmetry},
%$p_j(\kk)=p_j(\rkk)$ for $j\ge 1$
the right-hand side is 
	\begin{equation}\label{eqThmpp}
	   \begin{split}
		p_r(k_1,\dots,k_r)
		&=p_r(k_r,\dots,k_1)\\
		&=k_1p_{r-1}(k_r,\dots,k_2)-p_{r-2}(k_r,\dots,k_3)\\
		&=k_1p_{r-1}(k_2,\dots,k_r)-p_{r-2}(k_3,\dots,k_r)\,.
           \end{split}
	\end{equation}
Inserting \eqref{eqThmpp} into \eqref{eqThmp} and reducing the terms,
one finds that the inequality $m''>m'$ is equivalent to
$p_{r-2}(k_3,\dots,k_r)>0$, which is always true for $(k_3,\dots,k_r)\in\A_{r-2}$.

Now, for the second part of the argument, let $A$ and $B$ be the points of intersection of $d',d''$
with $\{x=1\}$
respectively, that is, $\{A\}=d'\cap\{x=1\}$ and  $\{B\}=d''\cap\{x=1\}$. It
remains to show that $B$ lies under $A$, which is the same as showing
that
	\begin{equation}\label{eqThmq}
		(k_1+1)\big(p_{r-1}(k_2,\dots,k_r)+1\big)
		<p_r(k_1,\dots,k_r)\,.
	\end{equation}
In order to make apparent the influence of $k_1$ and $k_r$ in this
inequality, we extract them by reducing the order. This is done by
using several times \eqref{eqR}, as in \eqref{eqThmpp}. Then
\eqref{eqThmq} reduces to the following inequality
	\begin{multline*}%\label{eqThlast}
	\qquad
	k_1p_{r-3}(k_2,\dots,k_{r-2})+k_r\Big(
		2p_{r-3}(k_3,\dots,k_{r-1})+p_{r-2}(k_2,\dots,k_{r-1})\Big)\\
	<k_1k_rp_{r-2}(k_2,\dots,k_{r-1})
		+p_{r-3}(k_2,\dots,k_{r-2})+2p_{r-4}(k_3,\dots,k_{r-2})\,.
	\qquad
	\end{multline*}
Here, since $k_2,\dots,k_{r-1}$ are bounded, the inequality becomes true as
soon as both $k_1$ and $k_r$ get larger than a certain quantity, so
$B$ lies under $A$.

In conclusion, an admissible tuple has at most one very large component.
On the other hand there are many possible combinations that consist of small numbers that may
form a subsequence of an admissible tuple. Moreover, when the components of $\kk$ follow a regular
pattern, the vertices of polygons $\T_r[\kk]$ can be
expressed in closed formulas. Such a pattern is $ \dots, 1,4,1,4,\dots$, but the meaningful
example is the constant sequence of $2$s that appear in the neighborhood of a large peak.
The formulas recorded in the following proposition are obtained by recording the data, step
by step, during an induction process that resembles the one described above.
%%%%%%%%%%%%%%%%%%%%%%%%%%%%%%%%%%%%%%%%%%%%%%%%%%%%%

%%%%%%%%%%%%%%%%%%%%%%%%%%%%%%%%%%%%%%%%%%%%%%%%%%%

% Then, by Lemma~\ref{Lemma107}, it will follow that if one of the
% components of $\kk$ is large, then $\kk$ is uniquely
% determined. Furthermore, the corresponding polygons $\T_r[\kk]$ are
% explicitly given by~\eqref{eqn107z}--\eqref{eqn107b}.
% 
% The polygons $\T[\kk]$ with one component of the chain $\kk$ large are
% given explicitly in the following lemma. 
%The calculations involved are similar to those 
%employed to determine the vertices of polygons $\T[\kk]$ involved in the arguments used in
%\cite{CVZ2010a}.
%(see \cite{Chains}).
\begin{proposition}\label{Lemma107}
Fix $s\ge 0$ and $t\ge 0$. Then there exists a positive integer $k_0$ depending on $s$ and $t$
only, such that for any integer $k\ge k_0$ 
the quadrangle 
\vspace*{-4.5mm}
$\T_{s+1+t}[\underbrace{2,\dots,2,1}_{s\ \rm{components}}\!\!, k,
        \underbrace{1,2,\dots,2}_{t\ \rm{components}}]$
has vertices  given by:
    \begin{equation}\label{eqnewALL}
      \begin{cases}
           & \Big\{ \Big(\frac{k}{k+2},\frac{2}{k+2}\Big);\,
        \Big(\frac{k+1}{k+1},\frac{2}{k+1}\Big);\,
        \Big(\frac{k}{k},\frac{2}{k}\Big);\,
        \Big(\frac{k-1}{k+1},\frac{2}{k+1}\Big)
        \Big\},\, \quad\rm{for}\ s=0,\\
&\mbox{}\\
& \Big\{ \Big(\frac{k-2s}{k+2},\frac{k-2s+2}{k+2}\Big);\,
        \Big(\frac{k-2s+1}{k+1},\frac{k-2s+3}{k+1}\Big);\,\\
        &\qquad\qquad\quad\ \Big(\frac{k-2s}{k},\frac{k-2s+2}{k}\Big);\,
        \Big(\frac{k-2s-1}{k+1},\frac{k-2s+1}{k+1}\Big)\Big\}\,,
        \quad\rm{for}\ s\ge 1\,.
      \end{cases}
    \end{equation}

%For $k\ge k_0$, which can be calculated explicitly, we have:
%     \begin{equation}\label{eqn107z}
%       \begin{split}
%         \T_{1}[k]&=\Big\{ \Big(\frac{k}{k+2},\frac{2}{k+2}\Big);\,
%         \Big(\frac{k+1}{k+1},\frac{2}{k+1}\Big);\,
%         \Big(\frac{k}{k},\frac{2}{k}\Big);\,
%         \Big(\frac{k-1}{k+1},\frac{2}{k+1}\Big)
%         \Big\};\,\\
% 	\T_1[k]&=\T_2[k,1]=\T_3[k,1,2]=\T_4[k,1,2,2]=\T_5[k,1,2,2,2]=\cdots\\
%    	&=\T_{t+2}[k,\underbrace{1,2,\dots,2}_{t\ \rm{components}}]\,,
%         \quad\rm{for}\  t\ge 0;
%       \end{split}
%     \end{equation}
%
%     \begin{equation}\label{eqn107a}
%       \begin{split}
%         \T_{2}[1,k]&=\T_{3}[2,1,k]=\T_{4}[2,2,1,k]=\T_{5}[2,2,2,1,k]=\cdots
%         =\T_{s+1}[\underbrace{2,\dots,2,1}_{s\ \rm{components}},k]\\
%         &=\Big\{ \Big(\frac{k-2s}{k+2},\frac{k-2s+2}{k+2}\Big);\,
%         \Big(\frac{k-2s+1}{k+1},\frac{k-2s+3}{k+1}\Big);\,\\
%         &\qquad\qquad\Big(\frac{k-2s}{k},\frac{k-2s+2}{k}\Big);\,
%         \Big(\frac{k-2s-1}{k+1},\frac{k-2s+1}{k+1}\Big)\Big\}\,,
%         \quad\rm{for}\ s\ge 1;
%       \end{split}
%     \end{equation}
% and
%     \begin{equation}\label{eqn107b}
%       \begin{split}
%         \T_{s+1+t}[\underbrace{2,\dots,2,1}_{s\ \rm{components}},k,
%         \underbrace{1,2,\dots,2}_{t\ \rm{components}}]=
%         \T_{s+1}[\underbrace{2,\dots,2,1}_{s\ \rm{components}},k],
%         \quad\rm{for}\ s\ge 1\ \rm{and}\ t\ge 0.
%       \end{split}
%     \end{equation}
  \end{proposition}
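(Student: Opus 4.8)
The plan is to build the tile forward, one valence at a time, using the recursive description \eqref{eqalgorit}: each region is cut out of the previous one by a single floor condition, which by Lemma~\ref{Lemma4} amounts to intersecting with an angular region bounded by two lines whose coefficients are the polynomials $p_j,p_{j-1}$ evaluated along the chain. Thus every intermediate region is a convex polygon, and I would track its (at most four) vertices through the whole construction. Two elementary facts make all coefficients explicit: from \eqref{eqR} one gets $p_n(2,\dots,2)=n+1$, and the step map attached to a valence $2$, namely the shear $S=\left(\begin{smallmatrix}0&1\\-1&2\end{smallmatrix}\right)$ sending $(x_{j-2},x_{j-1})$ to $(x_{j-1},x_j)$, satisfies $S^{\,n}=\left(\begin{smallmatrix}1-n&n\\-n&1+n\end{smallmatrix}\right)$. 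It is the entries of $S^{\,n}$, linear in $n$, that will ultimately produce the shift $k-2s$ in \eqref{eqnewALL}.

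I would start from the base cases. For $s=0$ I compute $\T_1[k]$ directly: intersecting $\T$ with the single wedge $\tfrac{x+1}{k+1}<y\le\tfrac{x+1}{k}$ gives a quadrangle whose vertices are the four intersections of the two wedge lines with the edges $x=1$ and $x+y=1$ of $\T$, and these are exactly the $s=0$ points. For the inductive regime I first treat $s=1$, i.e.\ $\T_2[1,k]$: the leading $1$ contributes $y>\tfrac{x+1}{2}$, the peak $k$ supplies the two lines $x=\tfrac{(k-1)y-1}{k}$ and $x=\tfrac{ky-1}{k+1}$, and the four vertices arise as the intersections of these two peak lines with the top edge $y=1$ of $\T$ and with the line $y=\tfrac{x+1}{2}$; this yields the $s=1$ instance of the second case of \eqref{eqnewALL}.

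For general $s\ge1$ I would exploit the shear directly. The leading block of $s-1$ twos acts on $(x,y)$ by $S^{\,s-1}$, so the core ($t=0$) tile is the set of $(x,y)\in\T$ whose image $(x_{s-2},x_{s-1})=S^{\,s-1}(x,y)$ lies in the $s=1$ tile $\T_2[1,k]$ and whose earlier iterates $S^{\,i}(x,y)$, $0\le i\le s-2$, lie in the valence-$2$ region. Pulling the four vertices of the $s=1$ tile back by $S^{-(s-1)}=\left(\begin{smallmatrix} s & 1-s\\ s-1 & 2-s\end{smallmatrix}\right)$ gives precisely the four points of \eqref{eqnewALL}; this single matrix computation accounts for the shift $k-2s$ and for the denominators $k+2,k+1,k,k+1$ being independent of $s$. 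It then remains to check that the auxiliary constraints --- membership in $\T$ and the intermediate valence-$2$ conditions --- are inactive on this quadrangle once $k\ge k_0(s)$, which is a finite list of linear inequalities to be tested at the four vertices.

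Finally I would dispose of the trailing block, which explains the absence of $t$ from \eqref{eqnewALL}. The claim is that for $k$ large the quadrangle is already pinned into so small a region that every floor condition coming from $1,2,\dots,2$ on the right holds on all of it: I write the two parts ($\ge$ and $<$) of each such floor as linear inequalities in $(x,y)$ via Lemma~\ref{Lemma3} and evaluate them at the four vertices, so that linearity forces them on the whole closed quadrangle. The only one that is not automatic is the $<2$ part coming from the junction $1$ just to the right of $k$, and this is exactly what pushes $k$ past a threshold, making $k_0$ depend on $s$ and $t$. The hard part will not be any single inequality --- each is linear and settled by a vertex check --- but controlling the global combinatorial picture uniformly in $s,t,k$: I must show that for $k\ge k_0(s,t)$ the quadrangle stays nondegenerate and that among the roughly $2(s{+}1{+}t)$ floor inequalities exactly the claimed four are tight while all others are strictly slack, so that no spurious edge appears. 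Making $k_0$ explicit and verifying simultaneous slackness of the many leading- and trailing-$2$ constraints is the real content; everything else is bookkeeping organized by the step map $S$.
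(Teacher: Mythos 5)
Your proposal is correct and follows the route the paper intends: a step-by-step induction along the valence chain via \eqref{eqalgorit}, tracking the vertices of the convex polygon cut out at each stage and checking that for $k\ge k_0(s,t)$ only four of the linear constraints remain active. The paper compresses its entire argument into one sentence (``the formulas \dots are obtained by recording the data, step by step, during an induction process''), so your explicit organization via the shear $S=\left(\begin{smallmatrix}0&1\\-1&2\end{smallmatrix}\right)$ with $S^{n}=\left(\begin{smallmatrix}1-n&n\\-n&1+n\end{smallmatrix}\right)$ --- which is precisely what produces the shift $k-2s$ and the $t$-independence, and whose base cases $s=0,1$ and pullback by $S^{-(s-1)}$ I have checked against \eqref{eqnewALL} --- actually supplies more detail than the published proof.
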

Notice the two 'attractors'  $(1,0)$ and $(1,1)$ of the shrinking quadrangles $\T_{r}[\kk]$ with one
component large.
They are indicated by the first and second case of relation~\eqref{eqnewALL}, respectively.
% Notice the two 'attractors' of the shrinking quadrangles $\T_{r}[\kk]$ with one component large.
% These are $(1,0)$ and $(1,1)$ and are indicated by the first and second case of
% relation~\eqref{eqnewALL}, respectively.
% 
% for the quadrangles with
% % $s=0$, cf.  \eqref{eqn107z}, and the second one is $(1,1)$, for the
% % quadrangles with $s\ge 1$, cf. \eqref{eqn107a} and \eqref{eqn107b}.
% $s=0$, cf.  the first case of \eqref{eqnewALL}, and the second one is $(1,1)$, for the
%  quadrangles with $s\ge 1$, cf. the second case of \eqref{eqnewALL}.
%
In particular, since  polygons $\T_{r}[k,*]$ are subsets of $\T_{1}[k]$,
Proposition~\ref{Lemma107} shows that when a component $k$ of an admissible tuple $\kk$
is large enough, it should be followed by $1$, and next, the more distant close neighbors 
should be $2$s. By symmetry, this pattern identifies uniquely the components that precede the very
large component, also, and this concludes the proof of the theorem.

%it's neighbors should be $1$s and the more distant close neighbors that follow should be $2$s.

% Then, by Lemma~\ref{Lemma107}, it will follow that if one of the
% components of $\kk$ is large, then $\kk$ is uniquely
% determined. Furthermore, the corresponding polygons $\T_r[\kk]$ are
% explicitly given by~\eqref{eqn107z}--\eqref{eqn107b}.

% The polygons $\T[\kk]$ with one component of the chain $\kk$ large are
% given explicitly in the following lemma.

% \medskip 
% \begin{proof}
% The completion of the proof of the lemma %Theorem~\ref{TheoremAr} 
% is by induction on $r$.
% The initial cases were the object of Section~\ref{sec:SmallOrders}. Thus
% we may assume that $r\ge 4$.
% 
% \end{proof}

%\subsection*{Acknowledgement}
\smallskip
\noindent
\textbf{Acknowledgement}
     The authors are grateful to the referee for his useful comments and suggestions.  

%\vspace*{-10mm}
% 
%%%%%%%%%%%%%%%%%%%%%

\end{document}